\newtheorem{theorem}{Theorem}[section]
\newtheorem{lemma}[theorem]{Lemma}
\newtheorem{definition}[theorem]{Definition}
\newtheorem{proposition}[theorem]{Proposition}
\newtheorem{remark}[theorem]{Remark}
 \DeclareMathSymbol{\N}{\mathbin}{AMSb}{"4E}
\DeclareMathSymbol{\Z}{\mathbin}{AMSb}{"5A}
\DeclareMathSymbol{\R}{\mathbin}{AMSb}{"52}
\DeclareMathSymbol{\Q}{\mathbin}{AMSb}{"51}
\DeclareMathSymbol{\I}{\mathbin}{AMSb}{"49}
\DeclareMathSymbol{\C}{\mathbin}{AMSb}{"43}
\def\L{{\mathbb L}}
\numberwithin{equation}{section}
\begin{document}

\title{Stringy Mirror Symmetry}

\author{Jyh-Haur Teh}
%\email{jyhhaur@math.nthu.edu.tw}
%\address{Department of Mathematics, National Tsing Hua University of Taiwan, No. 101, Sec. 2,
%Kuang Fu Road, Hsinchu, Taiwan}

%\classification{14C25 (primary), 14E99 (secondary).}

%\keywords{}

\date{}

\maketitle

\begin{abstract}
We prove that the mirror pairs constructed by Batyrev and Borisov
have stringy mirror symmetry.
\end{abstract}

\section{Introduction}
The topological mirror symmetry test predicts that if two smooth
$n$-dimensional Calabi-Yau manifolds $V$ and $V^*$ form a mirror
pair, then their Hodge numbers satisfy the relations
$$h^{p, q}(V)=h^{n-p, q}(V^*), 0\leq p, q\leq n$$
But many mirror pairs $(V, V^*)$ found by physicists are Calabi-Yau
varieties with Gorenstein abelian quotient singularities and these
relations do not hold if $V$ or $V^*$ is not smooth. To formulate a
correct mirror symmetry, Batyrev and Dais introduced (see \cite{BD})
the notion of stringy Hodge numbers $h^{p, q}_{st}$ for Calabi-Yau
varieties with at worst log-terminal singularities and Batyrev
modified the topological mirror symmetry test (see \cite{Baty}) to
$$h^{p, q}_{st}(V)=h^{n-p, q}_{st}(V^*), 0\leq p, q\leq n$$
He and Borisov (see \cite{BB1}) proposed a general construction of
mirror pairs which includes mirror constructions of physicists for
rigid Calabi-Yau manifolds, and in \cite{BB2}, they showed that
their mirror pairs satisfy the modified topological mirror symmetry
test.

In \cite{Teh}, we modified the construction of motivic integration
and introduced the notion of stringy invariants. One of the most
interesting invariants to us is given by the dimension of subspaces
of cohomology groups which are generated by algebraic cycles. In
particular, we showed that for two birational Calabi-Yau manifolds,
their corresponding subspaces generated by algebraic cycles have the
same dimensions. As a consequence, if the Hodge conjecture or
Grothendieck standard conjecture is true in one of them, then it is
true for another one. We conjectured (see \cite[Conjecture 3]{Teh})
that the Batyrev-Borisov's mirror pairs satisfy a stringy mirror
symmetry test. The main result in this paper is to show that this
actually follows directly from Batyrev-Borisov's result.

\section{Main result}
We recall briefly notations from \cite{Teh}. Let $K_0(Var)$ be the
Grothendieck group of complex algebraic varieties and
$S=\{\L^i\}_{i\geq 0}$ where $\L=\C$ and we denote by
$\L^0=1=point$. Let $\mathscr{L}=\Z\{\L^i|i\in \Z_{\geq 0}\}$ be the
free abelian group generated by $\L^i$ for all nonnegative $i$. Then
$K_0(Var)$ is canonically a $\mathscr{L}$-module. Let
$\mathcal{N}=S^{-1}K_0(Var)$ be the localization of $K_0(Var)$ with
respect to $S$. Let $F^k\mathcal{N}$ be the subgroup of
$\mathcal{N}$ generated by elements of the form $[X]/\L^i$ where
$i-dim X\geq k$. Then we have a decreasing filtration
$$\cdots \supset F^kN \supset F^{k+1}N \supset \cdots $$
of abelian subgroups of $\mathcal{N}$. The Kontsevich group of
varieties is defined to be
$\hat{\mathcal{N}}:=\underset{\leftarrow}{\lim}\frac{\mathcal{N}}{F^k\mathcal{N}}$
and $\overline{\mathcal{N}}$ is the image of the canonical map
$\mathcal{N} \rightarrow \hat{\mathcal{N}}$.  Then
$\overline{\mathcal{N}}$ is canonically a $\mathscr{L}$-submodule. A
motivic invariant is a group homomorphism from
$\overline{\mathcal{N}}[(\frac{1}{\L^i-1})_{i\geq 1}]$ to $\Z$ and
we say that a family of motivic invariants $\phi=\{\phi_{j, n}|j,
n\in \Z\}$ is of type $(a, b)\in \Z\times \Z$ if $\phi_{j,
n}(X\times \L^k)=\phi_{j-ak, n-bk}(X)$ for any $j, n$ and any
variety $X$. We say that $\phi$ is bounded if $\phi_{j, n}(X)$
vanishes for $|j|, |n|$ large enough, depending on $X$.

\begin{definition}
Suppose that $\phi=\{\phi_{j, n}|j, n\in \Z\}$ is a family of
bounded motivic invariants of type $(a, b)$. We define
$$\phi(X; u, v):=\sum_{j, n}\phi_{j, n}(X)u^jv^n$$ and
$$\phi(\L^{-1};u, v)=(u^av^b)^{-1}\phi(\L^0; u, v)$$ then we have a group homomorphism
$$\phi:\overline{\mathcal{N}}[(\frac{1}{\L^i-1})_{i\geq 1}] \rightarrow \Z[[u, v, (u^av^b)^{-1}]]$$
If $X$ is a normal irreducible algebraic variety with at worst
log-terminal singularities, and $\rho:Y \rightarrow X$ is a
resolution of singularities such that the relative canonical divisor
$D=\sum^r_{i=1}a_iD_i$ has simple normal crossings. Then the stringy
$\phi$-function of type $(a, b)$ associated to $\phi$ is defined to
be
$$\phi^{st}(X; u, v):=\sum_{J\subset I}\phi(D^0_J; u, v)\prod_{j\in J}\frac{u^av^b-1}{(u^av^b)^{a_j+1}-1}$$
where $I=\{1, ..., r\}$ and
$$D_J=\left\{
        \begin{array}{ll}
          \cap_{j\in J}D_j, \mbox{ if }J\neq \emptyset;\\
          X, \mbox{ if } J=\emptyset.
        \end{array}
      \right.
$$
and $D^0_J:=D_j-\cup_{i\neq J}D_i$.
\end{definition}

\begin{remark}
In \cite{Teh}, Definition 13, $\phi(\L^{-1};u, v)$ was defined to be
$(u^av^b)^{-1}$. This is not totally correct since we want to make
it a group homomorphism, we need to modify it to
$(u^av^b)^{-1}\phi(\L^0;u, v)$. Also, we need to modify Conjecture 3
in \cite{Teh} to the following statement which is the main result of
this paper.
\end{remark}

\begin{theorem}\label{main}
If $(V, W)$ is a Batyrev-Borisov's mirror pair and $\phi^{st}$ is a
stringy $\phi$-function of type $(a, b)$, then
$$\phi^{st}(V; u, v)=(-u^a)^n\phi^{st}(W; u^{-1}, v)$$ where $n$ is the dimension of $V$ and $W$.
\end{theorem}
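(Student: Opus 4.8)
The plan is to obtain the identity from the Batyrev--Borisov theorem by treating that theorem as the type $(1,1)$ instance of the present statement and then propagating it to arbitrary type $(a,b)$ through the substitution $(u,v)\mapsto(u^a,v^b)$. First I would isolate the base case. The Hodge--Deligne polynomial $E(\,\cdot\,;u,v)$ is a bounded motivic invariant of type $(1,1)$ with $E(\L^0;u,v)=1$, and the stringy $\phi$-function attached to it is exactly Batyrev's stringy $E$-function $E_{st}$. For this invariant the asserted identity reads
\[
E_{st}(V;u,v)=(-u)^n E_{st}(W;u^{-1},v),
\]
which is equivalent to $h^{p,q}_{st}(V)=h^{n-p,q}_{st}(W)$ --- precisely the modified topological mirror symmetry test proved in \cite{BB2}. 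I take this as the input.

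Next I would compare the two stringy functions termwise. Set $t=u^av^b$. Both $\phi^{st}(X;u,v)$ and $E_{st}(X;u,v)$ are assembled from the \emph{same} resolution data --- the divisors $D_i$, their discrepancies $a_i$, and the strata $D^0_J$ --- by the common template $\sum_{J\subset I}(\text{invariant of }D^0_J)\prod_{j\in J}\frac{t-1}{t^{a_j+1}-1}$, the only differences being that $t=uv$ for $E_{st}$ while $t=u^av^b$ for $\phi^{st}$, and that the stratum invariant is $E$ in one case and $\phi$ in the other. Under $u\mapsto u^a$, $v\mapsto v^b$ the correction factors for $E_{st}$ turn verbatim into those for $\phi^{st}$, so the whole comparison reduces to relating $\phi(D^0_J;u,v)$ to the reparametrized value $E(D^0_J;u^a,v^b)$. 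Writing $c:=\phi(\L^0;u,v)$, which for the invariants of interest is concentrated in bidegree $(0,0)$ and hence a constant (in particular invariant under $u\mapsto u^{-1}$), the type condition gives $\phi(\L^i;u,v)=(u^av^b)^i\,c=c\,E(\L^i;u^a,v^b)$. Thus for every stratum whose class is a polynomial in $\L$ --- the torus orbits of the ambient toric variety --- one gets $\phi(D^0_J;u,v)=c\,E(D^0_J;u^a,v^b)$ by additivity.

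Granting such a stratum comparison throughout, the conclusion is a one-line substitution: applying the Batyrev--Borisov identity with $(u,v)$ replaced by $(u^a,v^b)$ yields
\[
E_{st}(V;u^a,v^b)=(-u^a)^n E_{st}(W;u^{-a},v^b),
\]
and re-expressing both sides through the comparison (using $c=\phi(\L^0;u^{-1},v)$) lands on $\phi^{st}(V;u,v)=(-u^a)^n\phi^{st}(W;u^{-1},v)$, as desired. The entire content of the argument is therefore concentrated in the stratum comparison.

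The hard part will be exactly this comparison for the genuinely non-toric strata coming from the Calabi--Yau fibers. These carry off-diagonal Hodge structure, so $\phi$ is \emph{not} determined on them by the Hodge--Deligne polynomial; a naive termwise identity $\phi(D^0_J;u,v)=c\,E(D^0_J;u^a,v^b)$ cannot hold, and the mirror involution $u\mapsto u^{-1}$ is not induced by any operation on motives. One is thus forced to abandon the stratum-by-stratum reduction on the non-Tate part and instead compare the full stringy sums, using the combinatorial duality of the dual reflexive Gorenstein cones underlying \cite{BB2} to match the contributions of $V$ and $W$ for an arbitrary type $(a,b)$ invariant. Establishing that this combinatorial matching is fine enough to be detected by every bounded motivic invariant --- and not merely by $E_{st}$ --- is where the full force of the Batyrev--Borisov construction, rather than its numerical shadow for $E_{st}$, must be brought in, and is the step I expect to require the most care.
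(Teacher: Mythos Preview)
Your overall strategy coincides with the paper's: the paper's key Lemma is precisely the comparison
\[
\phi^{st}(X;u,v)=E_{st}(X;u^a,v^b)\,\phi(\L^0;u,v),
\]
and once that is in hand the theorem follows in one line by substituting $(u,v)\mapsto(u^a,v^b)$ into the Batyrev--Borisov identity $E_{st}(V;u,v)=(-u)^nE_{st}(W;u^{-1},v)$, exactly as you outline. Your observation that the constant $c=\phi(\L^0;u,v)$ must be invariant under $u\mapsto u^{-1}$ is also implicitly used in the paper's final step.

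Where you diverge from the paper is in the anticipated ``hard part''. The paper does \emph{not} attempt any argument for non-toric strata: the Lemma is stated and proved for a $\Q$-Gorenstein \emph{toric} variety $X$, following Batyrev's Theorem~4.3 verbatim. A toric resolution $\rho:X'\to X$ is taken, the strata $D^0_J$ are decomposed into torus orbits $X'_{\sigma'}\cong(\C^*)^{d-\dim\sigma'}$, and on each orbit $\phi$ evaluates to $(u^av^b-1)^{d-\dim\sigma'}\phi(\L^0;u,v)$ --- i.e.\ exactly your stratum comparison on classes polynomial in $\L$. The combinatorics then collapses both $\phi^{st}$ and $E_{st}$ to the same lattice-point generating function $\sum_{\sigma}\sum_{n\in\sigma^0\cap N}(u^av^b)^{-\varphi_K(n)}$, giving the Lemma. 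The proof of the Theorem then simply applies this Lemma to $V$ and $W$ without further comment.

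So the paper's argument is much shorter than you expect: there is no separate treatment of ``Calabi--Yau fibers with off-diagonal Hodge structure'', and no appeal to the dual-cone combinatorics beyond quoting the $E_{st}$ mirror identity as a black box. Either the Batyrev--Borisov pairs intended here are themselves toric (so that the Lemma applies directly and your worry evaporates), or the paper is silently assuming that the Lemma extends --- in which case the concern you raise in your last paragraph is legitimate, but it is a concern about the paper, not a missing step in your reconstruction of its argument. In either case, drop the anticipated hard part: what the paper actually does is exactly the toric computation you already sketched.
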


To prove this result, we first proceed as in the proof of
\cite[Theorem 4.3]{Baty} and then use Batyrev-Borisov's result of
the mirror symmetry of stringy $E$-functions of their mirror pairs.

Let $X$ be a normal $d$-dimensional $\Q$-Gorenstein toric variety
associated with a rational polyhedral fan $\Sigma\subset
N_{\R}=N\otimes \R$ where $N$ is a free abelian group of rank $d$.
For a cone $\sigma\in \Sigma$, let $\sigma^0$ be the relative
interior of $\sigma$. The property that $X$ is $\Q$-Gorenstein is
equivalent to the existence of a continuous function
$\varphi_K:N_{\R} \rightarrow \R_{>0}$ satisfying
\begin{enumerate}
\item $\varphi_K(e)=1$, if $e$ is a primitive integral generator of a 1-dimensional cone $\sigma\in \Sigma$.
\item $\varphi_K$ is linear on each cone $\sigma\in \Sigma$.
\end{enumerate}

The following desingularization result can be found in
\cite[Proposition 5-2-2]{KMM}.

\begin{proposition}
Let $\rho:X' \rightarrow X$ be a toric desingularization of $X$,
which is defined by a subdivision $\Sigma'$ of the fan $\Sigma$.
Then the irreducible components $D_1, ..., D_r$ of the exceptional
divisor $D$ of the birational morphism $\rho$ have only normal
crossings and they one-to-one correspond to primitive integral
generators $e'_1, ..., e'_r$ of those 1-dimensional cones
$\sigma'\in \Sigma'$ which do not belong to $\Sigma$. Moreover, in
the formula
$$K_{X'}=\rho^*K_X+\sum^r_{i=1}a_iD_i$$
one has $a_i=\varphi_K(e'_i)-1$ for all $i\in \{1, ..., r\}$.
\end{proposition}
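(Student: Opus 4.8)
The plan is to express both $K_{X'}$ and the pullback $\rho^*K_X$ as torus-invariant $\Q$-divisors supported on the rays of $\Sigma'$, to read off their coefficients from the single support function $\varphi_K$, and then to subtract. First I would recall the two standard facts from toric geometry that drive everything. For any toric variety $X_\Sigma$ the canonical divisor is $K_X=-\sum_{\rho}D_\rho$, the sum running over the rays $\rho$ of $\Sigma$ with $D_\rho$ the corresponding torus-invariant prime divisor; this follows from the invariant top form having a simple pole along each $D_\rho$. Second, a torus-invariant $\Q$-Cartier divisor $\sum_\rho c_\rho D_\rho$ is encoded by a support function, linear on each cone, whose value on the primitive generator $e_\rho$ is $-c_\rho$; conversely such a function recovers the divisor via its values on the rays. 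Comparing with conditions (1) and (2), the given $\varphi_K$ is exactly the support function attached to $K_X$: each ray has $c_\rho=-1$, so $\varphi_K(e_\rho)=1$, and linearity on cones is condition (2). The hypothesis that $X$ is $\Q$-Gorenstein is precisely what guarantees that these ray values extend to a function linear on every (possibly non-simplicial) cone, i.e.\ that $K_X$ is $\Q$-Cartier.

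The key geometric input is the behaviour of support functions under the refinement $\Sigma'$ of $\Sigma$. Since every cone of $\Sigma'$ lies inside a cone of $\Sigma$, the function $\varphi_K$ stays linear on each cone of $\Sigma'$ and hence defines a support function on the refined fan; the divisor it determines on $X'$ is exactly $\rho^*K_X$. I would therefore write $\rho^*K_X=\sum_{\rho'}(-\varphi_K(e'_{\rho'}))D_{\rho'}$, the sum over all rays $\rho'$ of $\Sigma'$. This is the step I expect to be the main obstacle to pin down cleanly: one must justify that pullback of a $\Q$-Cartier divisor along the birational toric morphism induced by a refinement is computed by "the same" support function, while keeping the sign convention consistent throughout. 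The cleanest route is to check it locally on each maximal cone $\sigma\in\Sigma$, where $K_X$ is cut out by a character $\chi^{m_\sigma}$ with $\langle m_\sigma,e_\rho\rangle=-c_\rho$, and to observe that pulling back $\chi^{m_\sigma}$ and reading its order of vanishing along the divisors attached to the rays of $\Sigma'$ inside $\sigma$ reproduces the linear function $\varphi_K$ on that cone.

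With both divisors written on the rays of $\Sigma'$, the discrepancy formula drops out by subtraction. From $K_{X'}=-\sum_{\rho'}D_{\rho'}$ the coefficient of $D_{\rho'}$ in $K_{X'}-\rho^*K_X$ is $\varphi_K(e'_{\rho'})-1$. The rays of $\Sigma'$ split into two classes: those already in $\Sigma$, for which $e'_{\rho'}$ is a primitive generator of a ray of $\Sigma$ and hence $\varphi_K(e'_{\rho'})=1$, giving coefficient $0$; and the genuinely new rays $e'_1,\dots,e'_r$, for which the coefficient is $a_i=\varphi_K(e'_i)-1$. Matching against $K_{X'}=\rho^*K_X+\sum_i a_iD_i$ yields the claimed values. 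Finally I would identify the exceptional components with the new rays: $\rho$ is an isomorphism over the open toric locus where $\Sigma$ and $\Sigma'$ agree, so each new ray $e'_i$ lies in the relative interior of a cone of $\Sigma$ of dimension at least two and its divisor $D_i$ is contracted, whereas the divisors attached to old rays are the strict transforms; thus the exceptional prime divisors are exactly $D_1,\dots,D_r$. The simple normal crossing property is then immediate from the smoothness of $X'$, since in a smooth toric variety each maximal cone is generated by part of a lattice basis, so the torus-invariant boundary divisors are smooth and meet transversally.
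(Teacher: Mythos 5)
Your proposal is correct, but note that the paper does not actually prove this proposition: it is quoted verbatim from the literature, with the text pointing to \cite[Proposition 5-2-2]{KMM} as the source. So there is no in-paper argument to compare against; what you have written is the standard toric proof that the reference itself relies on, and it is sound. The chain of facts you use --- $K_{X'}=-\sum_{\rho'}D_{\rho'}$, the correspondence between torus-invariant $\Q$-Cartier divisors and piecewise linear support functions with $\varphi_D(e_\rho)=-c_\rho$, the observation that a support function on $\Sigma$ remains a support function on the refinement $\Sigma'$ and computes the pullback, and the identification of the exceptional prime divisors with the new rays --- gives exactly the discrepancy $a_i=\varphi_K(e'_i)-1$, and your identification of $\varphi_K$ in the paper's conditions (1) and (2) with the support function of $-K_X$ (up to sign convention) is the right reading. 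One small wrinkle to tighten: in the $\Q$-Gorenstein case $K_X$ itself is generally not Cartier, so on a maximal cone $\sigma$ it is not $K_X$ but $mK_X$ (for suitable $m\geq 1$) that is cut out by a character $\chi^{m_\sigma}$; your local pullback verification should be run for $mK_X$ and then divided by $m$, which also explains why $\varphi_K$ and the $a_i$ are a priori only rational-valued. With that adjustment the step you flagged as the main obstacle --- that pullback along the refinement morphism is computed by the same support function --- goes through exactly as you sketch, and your normal-crossings argument from smoothness of $X'$ is the standard one. In short: a complete and correct proof of a statement the paper only cites.
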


\begin{lemma}
If $\phi^{st}$ is a stringy function of type $(a, b)$, then
$$\phi^{st}(X;u, v)=E_{st}(X; u^a, v^b)\phi(\L^0;u, v)$$
where $E_{st}(X; u, v)$ is the stringy $E$-function of $X$.
\end{lemma}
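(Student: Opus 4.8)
My plan is to match the defining sum of $\phi^{st}(X;u,v)$ against that of $E_{st}(X;u^a,v^b)$ subset by subset over $J\subset I$, using the toric resolution $\rho\colon X'\to X$ furnished by the preceding proposition. Recall that the stringy $E$-function is
$$E_{st}(X;u,v)=\sum_{J\subset I}E(D^0_J;u,v)\prod_{j\in J}\frac{uv-1}{(uv)^{a_j+1}-1},$$
so replacing $u$ by $u^a$ and $v$ by $v^b$ gives
$$E_{st}(X;u^a,v^b)=\sum_{J\subset I}E(D^0_J;u^a,v^b)\prod_{j\in J}\frac{u^av^b-1}{(u^av^b)^{a_j+1}-1}.$$
The rational factors appearing here are exactly the ones in the definition of $\phi^{st}(X;u,v)$, so the whole lemma will reduce to the single stratum-wise identity
$$\phi(D^0_J;u,v)=E(D^0_J;u^a,v^b)\,\phi(\L^0;u,v) \quad (\star)$$
for every $J\subset I$.

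The heart of the argument is $(\star)$, which I would prove by exploiting that each $D^0_J$ is a toric stratum. Since the $D_i$ are torus-invariant divisors of the toric variety $X'$, the intersection $D_J$ is an orbit closure and $D^0_J=D_J-\cup_{i\notin J}D_i$ is a disjoint union of finitely many torus orbits; each orbit has class $(\L-1)^k$ in $K_0(Var)$, so by the scissor relations $[D^0_J]=P_J(\L)$ for an integer polynomial $P_J$. Now the type-$(a,b)$ condition $\phi_{j,n}(Y\times\L^k)=\phi_{j-ak,n-bk}(Y)$ translates, after multiplying by $u^jv^n$ and reindexing, into $\phi(\L^k;u,v)=(u^av^b)^k\phi(\L^0;u,v)$ for $k\geq 0$; combined with additivity of the group homomorphism $\phi$ this gives $\phi(D^0_J;u,v)=P_J(u^av^b)\,\phi(\L^0;u,v)$. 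On the other hand $E(\L^k;u,v)=(uv)^k$ and $E(\L^0;u,v)=1$ force $E(D^0_J;u,v)=P_J(uv)$, hence $E(D^0_J;u^a,v^b)=P_J(u^av^b)$. Comparing the two expressions yields $(\star)$.

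Substituting $(\star)$ into the definition of $\phi^{st}(X;u,v)$ and pulling the common factor $\phi(\L^0;u,v)$ out of the sum over $J$ then leaves exactly the sum computed above, so $\phi^{st}(X;u,v)=\phi(\L^0;u,v)\,E_{st}(X;u^a,v^b)$, as claimed. The one place I expect to need genuine care is the toric input to $(\star)$: that the locally closed pieces $D^0_J$ really do decompose into torus orbits and hence carry classes polynomial in $\L$---this is precisely where the toric (equivalently, Hodge--Tate) nature of $X$ is essential, since $(\star)$ would fail for a general variety whose class is not a polynomial in $\L$. Boundedness of $\phi$ guarantees that every series in sight is a finite Laurent expression, so the reindexing and the extraction of the common factor involve no convergence subtleties.
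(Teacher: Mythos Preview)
Your argument is correct. The reduction to the stratum-wise identity $(\star)$ is valid because the rational factors in the definition of $\phi^{st}$ coincide with those in $E_{st}(X;u^a,v^b)$, and your proof of $(\star)$---that $[D^0_J]$ is a polynomial $P_J(\L)$ by the torus-orbit decomposition, and that any motivic invariant of type $(a,b)$ sends such a class to $P_J(u^av^b)\,\phi(\L^0;u,v)$---is exactly right.

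The paper takes a somewhat longer route. Rather than comparing the two stringy functions term by term over $J$, it reruns Batyrev's explicit toric computation (Theorem~4.3 of \cite{Baty}) with $uv$ replaced throughout by $u^av^b$: it expands $\phi(D^0_J;u,v)$ as $\sum_{\sigma'\in\Sigma'_0(J)}(u^av^b-1)^{d-\dim\sigma'}\phi(\L^0;u,v)$, rewrites each product $\prod_{j\in J}\frac{u^av^b-1}{(u^av^b)^{a_j+1}-1}$ as a lattice-point sum $(u^av^b-1)^{|J|}\sum_{n\in\sigma_J^0\cap N}(u^av^b)^{-\varphi_K(n)}$, and then uses the bijection between lattice points of $\Sigma$ and of $\Sigma'$ to collapse everything to the closed form $(u^av^b-1)^d\sum_{\sigma\in\Sigma}\sum_{n\in\sigma^0\cap N}(u^av^b)^{-\varphi_K(n)}\phi(\L^0;u,v)$, which it then recognizes as $E_{st}(X;u^a,v^b)\phi(\L^0;u,v)$. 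Both arguments rest on the same toric input---the orbit stratification making each $D^0_J$ polynomial in $\L$---but you use it once, abstractly, while the paper unpacks it into the full fan combinatorics. Your version is shorter and more transparent for the purpose at hand; the paper's version has the side benefit of yielding an explicit lattice-point formula for $\phi^{st}(X;u,v)$ itself.
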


\begin{proof}
Since the proof follows exactly as the proof \cite[Theorem
4.3]{Baty}, we briefly sketch the main idea of the proof and refer
the reader to the original paper of Batyrev. Let $\Sigma'(J)$ be the
star of the cone $\sigma_J\subset \Sigma'$, i.e., $\Sigma'(J)$
consists of the cones $\sigma'\in \Sigma'$ such that $\sigma'\supset
\sigma_J$. Denote by $\Sigma'_0(J)$ the subfan of $\Sigma'(J)$
consisting of those cones $\sigma'\in \Sigma'(J)$ which do not
contain any $e'_i$ where $i\notin J$. Then the fan $\Sigma'(J)$
defines the toric subvariety $D_J\subset X'$ and the fan
$\Sigma'_0(J)$ defines the open subset $D^0_J\subset D_J$. The
canonical stratification by torus orbits
$$X'=\bigcup_{\sigma'\in \Sigma'}X'_{\sigma}$$ induces the following stratifications
$$D^0_J=\bigcup_{\sigma'\in \Sigma'_0(J)}X'_{\sigma'},  \ \ \ \emptyset \neq J\subset I$$
Then apply the stringy $\phi$-function, we get
$$\phi^{st}(D^0_J;u, v)=\sum_{\sigma'\in \Sigma'_0(J)}(u^av^b-1)^{d-\mbox{dim }\sigma'}\phi(\L^0; u, v)$$
for $\emptyset \neq J\subset I$.

Let $\Sigma'(\emptyset)$ be the subfan of $\Sigma'$ consisting for
those cones $\sigma'\in \Sigma'$ which do not contain any element of
$\{e'_1, ..., e'_r\}$. Then $\Sigma'(\emptyset)$ defines the
canonical stratification of
$$X'\backslash D=\sum_{\sigma'\in \Sigma'(\emptyset)}X'_{\sigma'}$$
Then we have
$$\phi(X'\backslash D;u, v)=\sum_{\sigma'\in \Sigma'(\emptyset)}(u^av^b-1)^{d-\mbox{dim }\sigma'}$$
We may write
$$\sum_{n\in \sigma^0_J\cap N}(u^av^b)^{-\varphi_K(n)}
=\prod_{j\in J}\frac{(u^av^b)^{-\varphi_K(e'_j)}}{1-(u^av^b)^{-\varphi_K(e'_j)}}
=\prod_{j\in J}\frac{1}{(u^av^b)^{a_j+1}-1}$$ Therefore
$$\prod_{j\in J}\frac{u^av^b-1}{(u^av^b)^{a_j+1}-1}=
(u^av^b-1)^{|J|}\sum_{n\in\sigma^0_J\cap
N}(u^av^b)^{-\varphi_K(n)}$$ So by the resolution of singularities $\rho:X' \rightarrow X$, we have
$$\phi^{st}(X; u, v)=\phi(X\backslash D; u, v)+
\sum_{\emptyset\neq J\subset I}\phi(D^0_J;u, v)(u^av^b-1)^{|J|}(\sum_{n\in \sigma^0_J\cap N}(u^av^b)^{-\varphi_K(N)})
$$
$$=\sum_{\sigma'\in \Sigma'(\emptyset)}(u^av^b-1)^{d-\mbox{dim }\sigma'}+
\sum_{\emptyset\neq J\subset I}(\sum_{\sigma'\in
\Sigma'_0(J)}(u^av^b-1)^{d+|J|-\mbox{dim }\sigma'})
(\sum_{n\in\sigma^0_J\cap N}(u^av^b)^{-\varphi_K(n)})$$ By the second part of the computation \cite[Theorem 4.3]{Baty}, we have
$$\bigcup_{\sigma\in \Sigma}\sigma^0\cap N=\bigcup_{\sigma'\in
\Sigma'}(\sigma')^0\cap N$$ This gives us the equality
$$\phi^{st}(X; u, v)=(u^av^b-1)^d \sum_{\sigma\in \Sigma}\sum_{n\in \sigma^0\cap
N}(u^av^b)^{-\varphi_K(n)}\phi(\L^0; u, v)=E_{st}(X; u^a,
v^b)\phi(\L^0; u, v)$$
\end{proof}

\begin{proof}
Now Theorem ~\ref{main} is an easy consequence of the above
computation and Batyrev-Borisov's result. By Batyrev-Borisov's
result, $(V, W)$ satisfies the mirror symmetry $E_{st}(V; u,
v)=(-u)^nE_{st}(W; u^{-1}, v)$. So $\phi^{st}(V; u, v)=E_{st}(V;
u^a, v^b)\phi(\L^0; u, v)=(-u^a)^nE_{st}(W; (u^a)^{-1},
v^b)\phi(\L^0; u, v) =(-u^a)^n\phi^{st}(W; u^{-1}, v)$.
\end{proof}

\bibliographystyle{amsplain}

\end{document}